\documentclass[11pt, reqno]{amsart}

\usepackage {amsmath, amssymb, amscd, mathrsfs, url,verbatim,enumerate}


\usepackage{multirow,latexsym,graphicx,epstopdf}
\usepackage[dvipsnames,usenames]{color}
\usepackage[colorlinks=true, urlcolor=NavyBlue, linkcolor=NavyBlue, citecolor=NavyBlue]{hyperref}

\usepackage[text={6.5in,9in},centering,letterpaper,dvips]{geometry}
\usepackage[all]{xy}
\usepackage{epsfig}
\usepackage[latin1]{inputenc}
\usepackage{amsmath}
\usepackage{amsfonts}
\usepackage{amssymb}
\usepackage{amsthm}
\usepackage{amscd}
\usepackage{verbatim}
\usepackage{pinlabel}
\usepackage{enumerate}

\usepackage{verbatim}
	
    \oddsidemargin  0.0in
    \evensidemargin 0.0in
    \textwidth      6.5in
    \headheight     0.0in
    \topmargin      0.0in
    \textheight=8.5in

\newtheorem{theorem}{Theorem}[section]
\newtheorem{Theorem}{Theorem}
\newtheorem{lemma}[theorem]{Lemma}
\newtheorem{proposition}[theorem]{Proposition}

\newtheorem{Conjecture}[Theorem]{Conjecture}

\theoremstyle{definition}

\theoremstyle{remark}

\newtheorem{Question}[theorem]{Question}

\newcommand{\HFred}{HF^{\textup{red}}}

\newcommand{\Spinc}{$\mathrm{Spin}^c\;$}

\subjclass[2013]{}

\author[\c{C}a\u{g}r{\i} Karakurt]{\c{C}a\u{g}r{\i} Karakurt}
\address{Department of Mathematics, Bo{\u{g}}azi{\c{c}}i University, Bebek, {\.{I}}stanbul, TR-34342, Turkey}
\email{cagri.karakurt@boun.edu.tr}

\author[Takahiro Oba]{Takahiro Oba}
\address{Department of Mathematics, Tokyo Institute of Technology, 2-12-1 Ookayama, Meguroku, Tokyo 152-8551, Japan}
\email{ oba.t.ac@m.titech.ac.jp}

\author[Takuya Ukida]{Takuya Ukida}
\address{Department of Mathematics, Tokyo Institute of Technology, 2-12-1 Ookayama, Meguroku, Tokyo 152-8551, Japan}
\email{ukida.t.aa@m.titech.ac.jp}

\numberwithin{equation}{section}

\title{Planar Lefschetz fibrations and Stein structures with distinct Ozsv\'ath-Szab\'o invariants on corks}
\date{\today}

\begin{document}
\maketitle

\begin{abstract}

Thanks to a result of Lisca and Mati\'c and a refinement by Plamenevskaya, it is known  that on a 4-manifold with boundary Stein structures with non-isomorphic \Spinc structures induce contact structures with distinct Ozsv\'ath-Szab\'o invariants. Here we give an infinite family of examples showing that converse of Lisca-Mati\'c-Plamenevskaya theorem does not hold in general.  Our examples arise from Mazur type corks. 
\end{abstract}

\section{Introduction}

For any contact structure $\xi$ on a 3-manifold $Y$, let $c^+(\xi) \in HF^+(-Y)$ denote its Ozsv\'ath-Szab\'o invariant. Recall Lisca-Mati\'c-Plamenevskaya theorem:
\begin{theorem}\cite[Theorem 1.2]{LM}  \cite[Theorem 2]{P}
Let $W$ be a smooth compact $4$-manifold with boundary $W$ equipped with two Stein structures $J_1$ and $J_2$ with associated \Spinc structures  $\mathfrak{s}_1$ and $\mathfrak{s}_2$ on $W$, and the induced contact structures $\xi_1$ and $\xi_2$ on $\partial W$. If  $\mathfrak{s}_1$ and $\mathfrak{s}_2$ are not isomorphic then $\xi_1$ and $\xi_2$ are not isotopic; In fact $c^+(\xi_1)\neq c^+(\xi_2)$
\end{theorem}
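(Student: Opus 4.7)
The plan is to invoke the naturality of the Ozsv\'ath-Szab\'o contact invariant under Stein cobordisms. Given a Stein filling $(W,J)$ of $(Y,\xi)$ with canonical \Spinc structure $\mathfrak{s}_J$, I would remove a small ball from the interior of $W$ and view the result as a Stein cobordism from $(S^3,\xi_{\mathrm{std}})$ to $(Y,\xi)$. Reversing orientations yields a cobordism from $-Y$ to $-S^3$, and hence a \Spinc-refined map
$$F^+_{-W,\mathfrak{s}_J}\colon HF^+(-Y) \longrightarrow HF^+(-S^3) = \Tplus.$$
The main technical input, due to Plamenevskaya, is that this map sends $c^+(\xi)$ to the generator of $\Tplus$ sitting at the grading $d_3(\xi) - \tfrac{1}{4}\bigl(c_1(\mathfrak{s}_J)^2 - 2\chi(W) - 3\sigma(W)\bigr)$. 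In particular $c^+(\xi)\neq 0$ for every Stein fillable $\xi$.

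With this tool in hand I would argue by contradiction: suppose $c^+(\xi_1) = c^+(\xi_2)$ in $HF^+(-\partial W)$ while $\mathfrak{s}_1 \not\cong \mathfrak{s}_2$. The group $HF^+(-\partial W)$ decomposes as $\bigoplus_{\mathfrak{t}} HF^+(-\partial W,\mathfrak{t})$, and $c^+(\xi_i)$ lies in the summand indexed by the induced contact \Spinc structure $\mathfrak{s}_{\xi_i} = \mathfrak{s}_i|_{\partial W}$. If the boundary restrictions $\mathfrak{s}_1|_{\partial W}$ and $\mathfrak{s}_2|_{\partial W}$ differ, the two invariants sit in distinct summands, so the assumed equality forces both to vanish, contradicting the non-vanishing statement above.

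The harder case, and the one I expect to be the main obstacle, is when the boundary restrictions agree but $\mathfrak{s}_1 \not\cong \mathfrak{s}_2$ on $W$ itself. Here I would apply each cobordism map $F^+_{-W,\mathfrak{s}_i}$ to the supposed equality $c^+(\xi_1) = c^+(\xi_2)$. By Plamenevskaya's theorem, $F^+_{-W,\mathfrak{s}_i}(c^+(\xi_i))$ is a specific generator of $\Tplus$ at the $\mathbb{Q}$-grading dictated by $c_1(\mathfrak{s}_i)^2$; meanwhile, $F^+_{-W,\mathfrak{s}_1}$ and $F^+_{-W,\mathfrak{s}_2}$ shift absolute grading by amounts that differ by $\tfrac{1}{4}\bigl(c_1(\mathfrak{s}_1)^2 - c_1(\mathfrak{s}_2)^2\bigr)$. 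Whenever these self-intersection numbers differ, a direct grading comparison in $\Tplus$ delivers the contradiction. The genuinely delicate possibility is $c_1(\mathfrak{s}_1)^2 = c_1(\mathfrak{s}_2)^2$ with $\mathfrak{s}_1\not\cong \mathfrak{s}_2$; there one has to use the full \Spinc refinement, examining the collection $\{F^+_{-W,\mathfrak{s}}(c^+(\xi_i))\}$ as $\mathfrak{s}$ ranges over \Spinc classes on $W$ restricting to $\mathfrak{s}_{\xi_1}$ on the boundary, and showing that this family of elements in $\Tplus$ already encodes the isomorphism class of $\mathfrak{s}_{J_i}$.
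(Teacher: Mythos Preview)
The paper does not prove this theorem: it is quoted, with citations to \cite{LM} and \cite{P}, as background in the introduction, and no proof (or even sketch) is given. So there is nothing in the paper to compare your proposal against.

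That said, your outline is close to Plamenevskaya's actual argument, but you have not isolated the single fact that finishes it cleanly. What Plamenevskaya proves is not merely that $F^+_{-W,\mathfrak{s}_J}(c^+(\xi))$ is a specific generator of $\Tplus$, but the sharper statement that for \emph{any} \Spinc structure $\mathfrak{s}$ on $W$ extending $\mathfrak{s}_\xi$, one has $F^+_{-W,\mathfrak{s}}(c^+(\xi)) \neq 0$ \emph{if and only if} $\mathfrak{s} = \mathfrak{s}_J$. With this in hand your contradiction is a one-liner with no case analysis: if $c^+(\xi_1) = c^+(\xi_2)$, apply $F^+_{-W,\mathfrak{s}_1}$ to both sides; the left is nonzero (since $\mathfrak{s}_1$ is canonical for $J_1$) and the right is zero (since $\mathfrak{s}_1 \neq \mathfrak{s}_2$ is not canonical for $J_2$). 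Your final paragraph circles this idea (``the family encodes the isomorphism class of $\mathfrak{s}_{J_i}$'') without stating it, and the grading discussion in the preceding paragraph is a detour that only handles the case $c_1(\mathfrak{s}_1)^2 \neq c_1(\mathfrak{s}_2)^2$.
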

In the light of the above theorem a natural question to ask is whether the \Spinc structure of a Stein filling completely determines the Ozsv\'ath-Szab\'o invariant of the induced contact structure. An evidence towards a positive answer was provided in a work of  Karakurt \cite[Proposition 1.2]{K} where it was shown that the Ozsv\'ath-Szab\'o invariant depends only on the first Chern class of the Stein filling on $W$ when the total space of the filling is a special type of plumbing. Our main result suggests that the answer is in general negative. To state it  let $\pi: HF^+(-\partial Y)\to  \HFred (-\partial Y)$ be the natural projection map from the plus flavor to reduced Heegaard Floer homology. 

\begin{theorem}\label{theo:main} There exists an infinite family $\{W^n\,: \, n\in \mathbb{N} \}$ of compact contractible $4$-manifolds with boundary and Stein structures $J_1^n$ and $J_2^n$ on $W^n$ satisfying the following properties:
\begin{enumerate}
\item The \Spinc structures $\mathfrak{s}_1^n$ and $\mathfrak{s}_2^n$  associated to $J_{1}^{n}$ and $J_{2}^{n}$, respectively, are the same for every $n\in \mathbb{N}$.
\item  The induced contact structures $\xi_1^n$ and $\xi_2^n$ on $\partial W^{n}$ have distinct Ozsv\'ath-Szab\'o invariants, in fact $\pi (c^+(\xi^n_1))\neq 0$ and $\pi (c^+(\xi^n_2))= 0$, for every $n\in \mathbb{N}$.
\item the Casson invariant of $\partial W^n$ is given by $\lambda (\partial W^n)=2n$ for  every $n\in \mathbb{N}$.
\item $\partial W^n$ is irreducible for every $n\in \mathbb{N}$.
\end{enumerate}
\end{theorem}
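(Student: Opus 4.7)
The plan is to begin by constructing the family $\{W^n\}$ explicitly as Mazur-type corks via handle diagrams: a single $1$-handle together with a $2$-handle attached along a specific curve depending on the parameter $n$. Since each $W^n$ is to be contractible, $H^2(W^n;\Z)=0$, so there is a unique \Spinc structure on $W^n$. This makes condition $(1)$ automatic as soon as we exhibit two Stein structures on $W^n$, regardless of their Chern data. Accordingly, my first real task would be to present two Legendrian realizations of the $2$-handle curve, each producing a Stein structure $J_1^n, J_2^n$ in the sense of Eliashberg--Gompf.

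The core difficulty is $(2)$: showing the two contact boundaries have genuinely different Ozsv\'ath--Szab\'o invariants in a controlled way. Here I would invoke the title hypothesis and realize each Stein filling as the total space of a \emph{planar} Lefschetz fibration over $D^2$; this produces a planar open book supporting the induced contact structure $\xi_i^n$, whose monodromy is a positive factorization of Dehn twists in a planar surface's mapping class group. With a planar open book in hand, $c^+(\xi_i^n)$ can be computed combinatorially from the monodromy factorization using the recipe of Plamenevskaya and of Ozsv\'ath--Stipsicz, after identifying $HF^+(-\partial W^n)$ explicitly. Thus I would compute $HF^+(-\partial W^n)$ from a surgery description (for example by the integer surgery formula or via a rational surgery on a knot whose knot Floer complex is tractable), obtaining a concrete $\Z[U]$-module in which the contact elements will live.

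The strategy for condition $(2)$ is then to arrange that one factorization puts $c^+(\xi_1^n)$ at a grading where the $U$-tower has already terminated, so its image in $\HFred(-\partial W^n)$ is nonzero, while the other factorization forces $c^+(\xi_2^n)$ to land in the image of the inclusion $U\cdot HF^\infty\hookrightarrow HF^+$, so that $\pi(c^+(\xi_2^n))=0$. The most honest way to achieve this uniformly in $n$ is to let one of the two Stein structures be engineered (via positive stabilization or cobordism through a rational homology ball) to factor its contact element through the tower, while verifying the non-vanishing of $\pi(c^+(\xi_1^n))$ by comparing the combinatorial contact element to the explicit reduced Floer homology found above. Showing this works \emph{for all} $n\in\N$ simultaneously is the principal obstacle, and I expect it to rest on an inductive or uniform structural feature of the family, such as a common $U$-torsion pattern in $HF^+(-\partial W^n)$.

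Finally, $(3)$ and $(4)$ should follow from standard tools applied to the same surgery description of $\partial W^n$ used in step two. For $(3)$, apply Walker's surgery formula or the cabling/Casson surgery formulas to read off $\lambda(\partial W^n)=2n$ directly; the linear dependence on $n$ should be transparent from the parameter in the handle diagram. For $(4)$, irreducibility can be verified by exhibiting $\partial W^n$ as a (hyperbolic or small Seifert fibered) atoroidal manifold, for example by constructing an irreducible surgery description and ruling out essential spheres via Scharlemann--Gordon-type arguments, or by computing that $\partial W^n$ has nontrivial JSJ pieces that preclude a nontrivial connected sum decomposition.
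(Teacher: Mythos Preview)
Your plan for part~(2) contains a self-defeating step. You propose to realize \emph{both} Stein structures $J_1^n$ and $J_2^n$ via planar Lefschetz fibrations, so that both induced contact structures are supported by planar open books, and then to compute their contact invariants combinatorially. But the theorem of Ozsv\'ath--Stipsicz--Szab\'o (which the paper quotes as Theorem~\ref{t:OSS}) says precisely that \emph{any} contact structure supported by a planar open book has $\pi(c^+(\xi))=0$. Thus if both $\xi_1^n$ and $\xi_2^n$ were planar, you could never obtain $\pi(c^+(\xi_1^n))\neq 0$; the two invariants would be forced to agree in $\HFred$. No amount of combinatorial computation of the contact class from a planar factorization can get around this structural obstruction.

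The paper's argument is deliberately \emph{asymmetric} in exactly this respect. The first Stein structure $J_1^n$ is obtained from the Legendrian handle picture of an \emph{admissible} symmetric link $L^n$, and $\pi(c^+(\xi_1^n))\neq 0$ is imported as a black box from Akbulut--Karakurt (Theorem~\ref{t:AK}); there is no claim, and presumably no expectation, that $\xi_1^n$ is planar. Only the second Stein structure $J_2^n$ is built from a planar PALF, after which the Ozsv\'ath--Stipsicz--Szab\'o theorem gives $\pi(c^+(\xi_2^n))=0$ for free. In particular, the paper never computes $HF^+(-\partial W^n)$ at all; the two cited theorems do all of the Heegaard Floer work.

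For parts~(3) and~(4) your outline is in the right spirit but more laborious than necessary. The paper identifies $\partial W^n$ with $S^3_{1/n}(K)$ for a single fixed knot $K$; then irreducibility follows from Gordon--Luecke (a reducible surgery on a knot in $S^3$ must have a lens space summand, impossible for a homology sphere), and the Casson invariant is $\lambda(\partial W^n)=\tfrac{n}{2}\Delta_K''(1)=2n$ after a direct Seifert-matrix computation of $\Delta_K$. Establishing hyperbolicity or analyzing the JSJ decomposition is not needed.
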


Our examples $W^n$ are Mazur type manifolds obtained from the symmetric link $L^n$ in Figure \ref{fig:corkWnsymmetric} by putting a dot  on one of the components and attaching a $0$-framed $2$-handle  to the other component as in Figure \ref{fig:corkWn}. Note that  the manifold $W^1$ is the Akbulut cork. A Stein structure $J_1^n$ on $W^n$ can immediately  be obtained by drawing a Legendrian representative of the attaching circle of the $2$-handle and stabilizing as necessary to make the framing one less than the Thurston-Bennequin framing. Even though the choice of stabilizations is not unique, and different stabilizations potentially yield  Stein structures with distinct Ozsv\'ath-Szab\'o invariants,  the direct computation of these invariants does not seem plausible. Hence we take a different approach and construct the second Stein structure $J_2^n$ using the Loi-Piergallini-Akbulut-Ozbagci correspondence between Stein structures and positive allowable Lefschetz fibrations (PALFs in short). Our  key observation is that $W^n$  admits a planar PALF, that is, a PALF with planar fiber. This was already shown for the Akbulut cork $W^1$ by Ukida \cite{U}. The main result for $W^1$   then immediately follows by bringing together some known facts in the literature (see our proof below). One can easily promote  Ukida's example to an infinite family by repeatedly taking boundary sums of $W^1$. The irreducibility of $\partial W^n$  shows that our examples do not arise in this manner. In the body of our work we generalize Ukida's planar PALF construction to $W^n$, and compute the Casson invariants to distinguish $\partial W^n$'s.  Along the way we also prove that  $\partial W^n$ is obtained from $S^3$ by $1/n$-surgery on a knot, a fact we find interesting in its own right. 

\begin{figure}[h]
	\includegraphics[width=0.50 \textwidth]{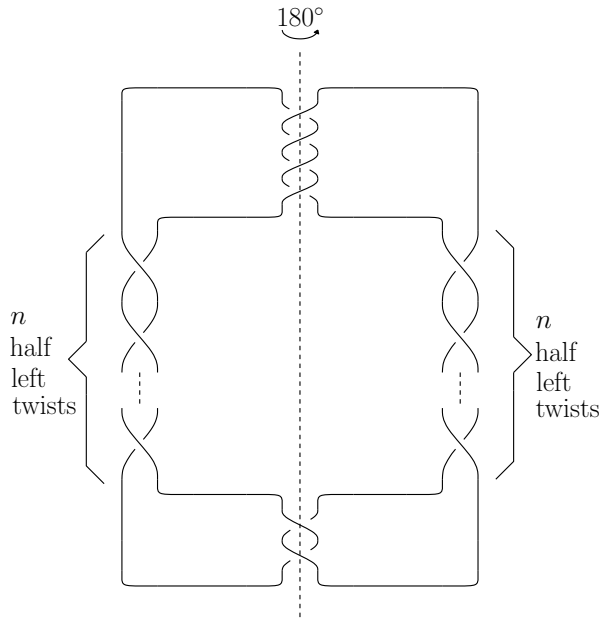}
	\caption{Symmetric picture of $L^n$. The indicated involution exchanges the components.}
	\label{fig:corkWnsymmetric}
\end{figure}

\begin{figure}[h]
	\includegraphics[width=0.40\textwidth]{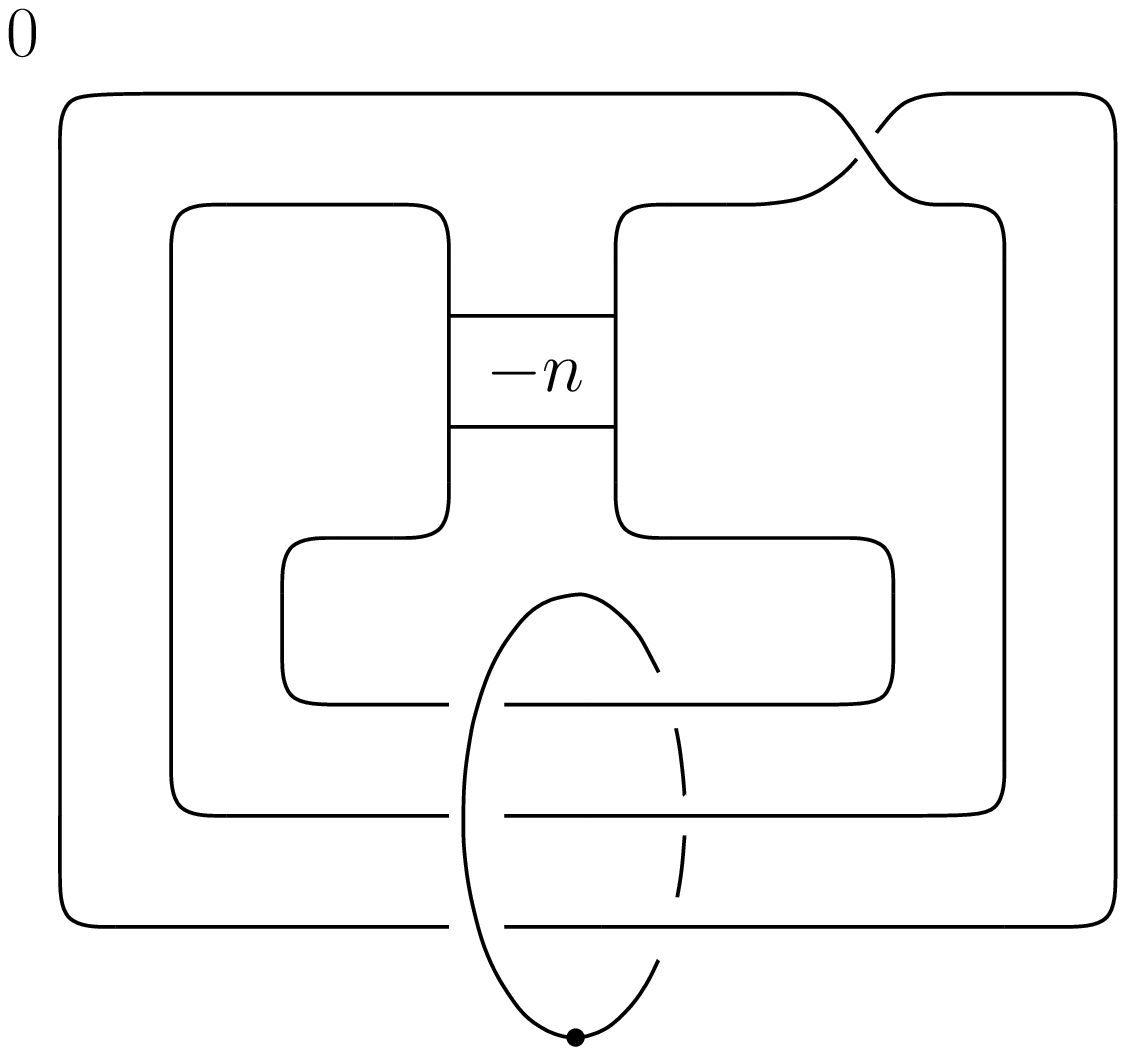}
	\caption{The handlebody $W^n=W(L^n)$. The box indicates $n$ full left twists.}
	\label{fig:corkWn}
\end{figure}

\section{Proof of the main theorem}
First recall the terminology from \cite{AK}.
Let $L$ be a link in $S^3$ with two components $K_1\cup K_2$. We say that  $L$  is an \emph{admissable} link if it satisfies the following conditions:
\begin{enumerate}
\item  Both  $K_1$ and $K_2$ are unknotted.
\item An involution of $S^3$  exchanges $K_1$ and $K_2$.
\item The linking number of $K_1$ and $K_2$ is $\pm 1$.
\item  Carve out a disk bounded by $K_1$ and regard  $K_2 \subset S^1\times S^2=\partial (S^1\times B^3)$ equipped with the unique Stein fillable contact structure. Then the  maximal Thurston-Bennequin number of $K_2$ is at least $+1$.
\end{enumerate}
From an admissible link, we can construct an obvious contractible Stein handlebody $W(L)$ by putting a dot on $K_1$, and attaching a $2$-handle along  some Legendrian representative $K_2$ with framing one less than the Thurston-Bennequin framing (this is possible thanks to the last condition). 

As in the introduction, let $L^n$ be the link  given in Figure \ref{fig:corkWnsymmetric}, and let $W^n:=W(L^n)$ denote the corresponding handlebody obtained by putting a dot on one of the components and $0$ on the other one as in Figure \ref{fig:corkWn}.

\begin{proposition}\label{prop:admis} For every $n\in\mathbb{N}$, the link $L^n$  is admissible. 
\end{proposition}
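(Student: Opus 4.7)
Conditions (1), (2), and (3) can be read off directly from the symmetric diagram in Figure \ref{fig:corkWnsymmetric}. Each of $K_1$ and $K_2$ bounds an obvious embedded disk in $S^3$: although the twist box contains $n$ full left twists, each component enters it as a pair of parallel strands that can be unwound when the other component is forgotten, so both components are unknotted. The involution of $S^3$ exchanging them is the symmetry displayed in the figure. The linking number is $\pm 1$ by a direct signed crossing count, and this count is independent of $n$ because the $n$ full twists contribute equal numbers of positive and negative crossings between $K_1$ and $K_2$.

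The substantive content of the proposition is condition (4). The plan is to exhibit an explicit Legendrian representative $\widehat{K}_2$ of $K_2$ in $(S^1 \times S^2, \xi_{std})$ with Thurston--Bennequin number at least $+1$. Carving out a disk bounded by $K_1$ converts $K_1$ into a dotted circle; after placing its two feet on a horizontal line one obtains the standard Legendrian handlebody presentation of $S^1 \times B^3$, in which Thurston--Bennequin is computed by the usual formula $\mathrm{tb}(\widehat{K}_2) = w(\widehat{K}_2) - c_r(\widehat{K}_2)$, with $w$ the writhe and $c_r$ the number of right cusps, and no correction at the $1$-handle.

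The main obstacle is the twist box. A naive Legendrian realization in which the $n$ full left twists are drawn as the obvious braid word contributes $-2n$ to the writhe, pushing $\mathrm{tb}(\widehat{K}_2)$ to $-\infty$ as $n$ grows. To compensate, I plan to route $\widehat{K}_2$ so that it passes across the twist region in a second way, using the $1$-handle to reverse the sign of the crossings it picks up, thereby adding $+2n$ back to the writhe at the cost of only a bounded number of additional cusps. A careful signed crossing and cusp count on the resulting front then yields $\mathrm{tb}(\widehat{K}_2) \geq 1$ uniformly in $n$.

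A clean way to keep the count transparent is to proceed by induction on $n$. The base case is the Akbulut cork $W^1$, for which an admissible Legendrian representative of $K_2$ is given by Ukida \cite{U}. For the inductive step, inserting one extra full left twist into the symmetric picture can be absorbed by a local modification of the front -- two additional Legendrian zig-zags on the appropriate strand -- that contributes $+2$ to the writhe and leaves the cusp count unchanged, exactly cancelling the $-2$ introduced by the new twist. Once such a Legendrian $\widehat{K}_2$ is displayed for every $n$, condition (4) follows by definition and $L^n$ is admissible.
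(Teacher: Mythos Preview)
Your treatment of conditions (1)--(3) and your overall strategy for (4)---exhibit a Legendrian representative of $K_2$ in $(S^1\times S^2,\xi_{\mathrm{std}})$ and compute $\mathrm{tb}$ from writhe minus right cusps---match the paper exactly. The difficulty is that your inductive step, as written, does not make sense. A ``Legendrian zig-zag'' in a front projection is a stabilization: it adds one cusp and changes the writhe by $\pm 1$, so two of them cannot contribute $+2$ to the writhe while leaving the cusp count unchanged. More fundamentally, you cannot separate ``the $-2$ introduced by the new twist'' from ``the $+2$ from zig-zags'' in a front: the extra full twist \emph{is} some local front pattern, and whatever that pattern is determines both its writhe and cusp contribution simultaneously. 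Without an explicit picture of this local pattern, the induction is just an assertion that a suitable front exists.

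The paper bypasses induction entirely: it draws a single explicit Legendrian front for all $n$ (Figure~\ref{fig:corkWnStein}) and reads off writhe $2n+1$ and half the number of cusps $2n-1$, so $\mathrm{tb}=2$ uniformly. Note in particular that in the paper's front each additional unit of $n$ increases \emph{both} the writhe and the half-cusp count by $2$, not $+2$ and $0$ as you claim; the point is that the twist region is drawn so that its crossings are \emph{positive} in the front (this is the ``routing through the $1$-handle'' you allude to but never specify). Your base-case citation is also off: Ukida's paper \cite{U} concerns the planar PALF on $W^1$, not a Legendrian realization of $K_2$.
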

\begin{proof}
In Figure \ref{fig:corkWnsymmetric} the $180^\circ$ rotation about the dashed axis exchanges the components of $L^n$.  It is also clear from the figure that  both components of $L^n$ are unknotted and  the linking number of these components is $\pm 1$.  We must check that the handlebody  $W^n$ is Stein. By Eliashberg's characterization, it suffices to show that the attaching circle of the $2$-handle has maximal Thurston Bennequin number $TB\geq 1$ in $S^1\times S^2$. In Figure \ref{fig:corkWnStein}, we draw a Legendrian representative of the attaching circle of the $2$-handle on $S^1\times S^2$. From the figure we see that the writhe is $2n+1$ and half the number of cusps is $2n-1$, implying that $TB\geq 2$. Hence a stabilization of the figure  gives  a Stein handlebody picture of $W^n$.

\begin{figure}[h]
	\includegraphics[width=0.45\textwidth]{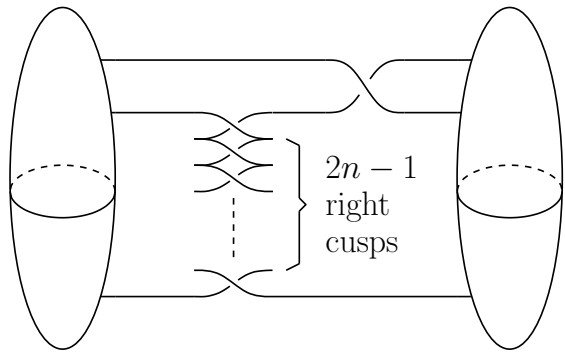}
	\caption{Cork $W^n$ as a Stein handlebody (Need to stabilize the $2$-handle once).}
	\label{fig:corkWnStein}
\end{figure}

\end{proof}
Denote the corresponding Stein structure on $W^n$ (for any choice of stabilization) by $J_1^n$, and the induced contact structure on $\partial W^n$ by $\xi_1^n$. The following result shows that $\pi (c^+(\xi_1^n))\neq 0$.

\begin{theorem}\cite[Theorem 4.1]{AK}\label{t:AK} Let $L$ be an admissible link, $W(L)$ be the corresponding Stein handlebody and $\xi$ the induced contact structure on $\partial W(L)$. Then $\pi (c^+(\xi))\neq 0$.
\end{theorem}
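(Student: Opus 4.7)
I would combine the Plamenevskaya--Ghiggini Stein cobordism naturality of the contact invariant with a grading analysis of $c^+(\xi)$, reducing the non-triviality of $\pi(c^+(\xi))$ to a reduced-Floer computation via the open book supporting $\xi$.

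First, by admissibility condition (4), $K_2$ admits a Legendrian representative in $(S^1\times S^2, \xi_{\mathrm{std}})$ with $tb(K_2)\geq 1$. Attaching the $2$-handle of $W(L)$ with framing $tb(K_2)-1$ thus yields a Stein cobordism $(C,J)\colon (S^1\times S^2, \xi_{\mathrm{std}})\to(\partial W(L),\xi)$. By Plamenevskaya's theorem, the associated cobordism map
\[
F^+_{\bar C}\colon HF^+(-\partial W(L))\to HF^+(-(S^1\times S^2))
\]
is $U$-equivariant and sends $c^+(\xi)$ to the nonzero contact class $c^+(\xi_{\mathrm{std}}^{S^1\times S^2})$; in particular $c^+(\xi)\neq 0$.

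Second, since $W(L)$ is contractible, $\partial W(L)$ bounds a $\mathbb{Z}$-homology ball, so $d(-\partial W(L))=0$ and we have the splitting $HF^+(-\partial W(L))=\mathcal{T}^+_{0}\oplus \HFred(-\partial W(L))$. Using $c_1(J)=0$, $\chi(W(L))=1$, and $\sigma(W(L))=0$, the Ghiggini grading formula places $c^+(\xi)$ in degree $0$. Writing $c^+(\xi)=\alpha\, t_0+\beta$, where $t_0$ is the tower generator in degree $0$ and $\beta\in \HFred(-\partial W(L))$, the statement $\pi(c^+(\xi))\neq 0$ becomes equivalent to $\beta\neq 0$.

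Third, I would detect $\beta$ via the Loi--Piergallini--Akbulut--Ozbagci correspondence: the Stein handlebody $W(L)$ is realized by a PALF whose boundary open book $(\Sigma,\phi)$ supports $\xi$ with $\phi$ an explicit product of right-handed Dehn twists determined by the Legendrian data in Figure \ref{fig:corkWnStein}. The Ozsv\'ath--Szab\'o open book formula then represents $c^+(\xi)$ by an explicit cycle in a Heegaard Floer chain complex whose class in $\HFred(-\partial W(L))$ can be read off.

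\textbf{Main obstacle.} The hardest step is the chain-level verification that $\beta\neq 0$. Because the choice of stabilization in the Legendrian picture is not unique, the computation must either be invariant under this choice or be carried out for a convenient stabilization. A natural simplification is to exploit the $\mathbb{Z}/2$-symmetry of the admissible link: the involution exchanging $K_1$ and $K_2$ gives rise to a second Stein structure on $W(L)$ inducing a contact structure $\tau^*\xi$ on $\partial W(L)$, and the relation $c^+(\tau^*\xi)=\tau_*c^+(\xi)$ under the cork-involution action on $HF^+(-\partial W(L))$ should constrain $\beta$ sufficiently to force it to be nonzero.
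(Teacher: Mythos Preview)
This theorem is not proved in the present paper; it is quoted from \cite[Theorem 4.1]{AK} and used as a black box. So there is no proof here to compare against, only the original argument in \cite{AK}.

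On the merits of your proposal: the first two steps are correct and standard. Stein-cobordism naturality gives $c^+(\xi)\neq 0$; contractibility of $W(L)$ forces $d(-\partial W(L))=0$ and places $c^+(\xi)$ in degree $0$; and the cobordism map $HF^+(-\partial W(L))\to HF^+(S^3)$ induced by $W(L)\setminus B^4$ (an isomorphism on towers, zero on $\HFred$) pins the tower component of $c^+(\xi)$ to $t_0$. So indeed $c^+(\xi)=t_0+\beta$ with $\beta\in\HFred$, and the entire content of the theorem is $\beta\neq 0$.

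Your Step 3 is not an argument. Asserting that the reduced class ``can be read off'' from an open-book cycle proves nothing, and for a \emph{general} admissible link there is no explicit Legendrian diagram to compute from --- your reference to Figure~\ref{fig:corkWnStein} concerns only the special family $L^n$, not arbitrary $L$. A chain-level computation of this kind would be link-by-link and would make no structural use of the admissibility hypotheses.

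The involution idea in your final paragraph is in the right spirit but, as written, has no force. The relation $c^+(\tau^*\xi)=\tau_*c^+(\xi)$ is automatic naturality. If $\beta=0$ then $c^+(\xi)=t_0$ lies in the tower; since $\tau_*$ is a grading-preserving $U$-equivariant automorphism it fixes $t_0$, hence $c^+(\tau^*\xi)=t_0$ as well, and no contradiction arises. To extract an obstruction from the symmetry one must feed in something that genuinely distinguishes the roles of the two link components --- this is where conditions (3) and (4) of admissibility, beyond merely furnishing a Stein structure, have to do real work. Your outline does not supply that input, so the gap at $\beta\neq 0$ remains open.
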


It is important for the above theorem that the Stein structure is the one coming from the handlebody picture associated to an admissible link. 

\begin{proposition}\label{prop:planar}
The manifold $W^n$ admits a planar PALF for every $n\in \mathbb{N}$.
\end{proposition}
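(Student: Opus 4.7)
The plan is to construct an explicit planar PALF on $W^n$ that directly generalizes Ukida's planar PALF on the Akbulut cork $W^1=W(L^1)$ from \cite{U}, and then verify by Kirby calculus that its total space is indeed $W^n$.

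First, I would fix a planar fiber $F$, namely a disk with a suitable number of holes depending on $n$, and designate a concrete collection of simple closed curves $c_1,\dots,c_k$ on $F$ as the vanishing cycles. The model for $n=1$ is Ukida's list of vanishing cycles; for general $n$ the additional cycles should encode the $n$ full left twists appearing in the box of Figure \ref{fig:corkWn}. A natural guess is to introduce a small fixed number of extra boundary components together with a chain of vanishing cycles going around the newly added holes for each increment of $n$, so that the set of vanishing cycles is obtained from Ukida's by inserting $n$ copies of a single ``twist block''.

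Next, I would translate this fiber-and-vanishing-cycle data into a Kirby diagram via the standard recipe: the total space of the PALF has one $0$-handle, one $1$-handle for each boundary component of $F$ beyond the first, and one $2$-handle per vanishing cycle, attached along the corresponding simple closed curve on a page of the induced open book on $\partial(F\times D^2)$ with framing one less than the page framing. I would then simplify this Kirby diagram by a sequence of handle slides, Rolfsen twists, and $1$-handle/$2$-handle cancellations, aiming to recover the handlebody picture of $W^n$ in Figure \ref{fig:corkWn}. Along the way I must also verify the allowability condition, namely that no vanishing cycle bounds a disk in $F$.

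The main obstacle will be the Kirby-calculus verification that the resulting total space equals $W^n$: the diagram grows with $n$, so one needs a uniform reduction that handles every $n$ at once, most naturally by induction on $n$ using the twist-block interpretation of the extra vanishing cycles. Finding the right inductive block, so that the associated $2$-handles simultaneously (i)~remain non-separating curves on a planar fiber, (ii)~reduce after cancellations to a single full left twist in Figure \ref{fig:corkWn}, and (iii)~preserve the contractibility and Mazur-type structure of $W^n$, is the key combinatorial step, and the delicate point is to ensure that an inductively added block does not accidentally change the smooth type of the total space.
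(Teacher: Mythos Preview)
Your plan is essentially the paper's argument run in reverse. The paper also establishes the identification via Kirby calculus, but instead of first guessing a planar fiber and a list of vanishing cycles and then simplifying the associated Kirby diagram down to Figure~\ref{fig:corkWn}, it starts from Figure~\ref{fig:corkWn} and applies a uniform sequence of handle moves (the same moves for all $n$, with the twist box carried along intact) until the diagram is visibly the total space of a PALF. The outcome is a disk with $n+3$ holes and monodromy $t_a t_b t_c t_{d_1}\cdots t_{d_n}$; in particular each increment of $n$ adds exactly one hole and one vanishing cycle, not a fixed block of several.

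Running the argument in the paper's direction buys you two things. First, there is no guesswork: the fiber and vanishing cycles are \emph{derived} from the handle moves rather than conjectured and then checked, so the ``key combinatorial step'' you flag as the main obstacle simply does not arise. Second, no induction on $n$ is needed: because the handle moves never interact with the interior of the twist box, a single sequence of pictures proves the result for all $n$ simultaneously. Your direction would of course also work once you had the correct curves in hand, but the paper's orientation is both shorter and removes the risk you identify of an inductive block changing the smooth type.
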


\begin{proof}
For $n=1$, this result was proved by Ukida in \cite{U}. We generalize Ukida's argument in an obvious manner.  We apply the handlebody moves indicated in Figure \ref{fig:CorkWn_comp}. Clearly the last diagram gives the total space of PALF whose fibers are disks with $n+3$ holes and monodromy is the following product of right handed Dehn twists $t_a t_b t_c t_{d_1}\cdots t_{d_n}$ where $a$, $b$, $c$, $d_1\dots d_n$ are the curves indicated in Figure \ref{fig:corkWnPALF}. 
\end{proof}

\begin{figure}[h]
	\includegraphics[width=1.0\textwidth]{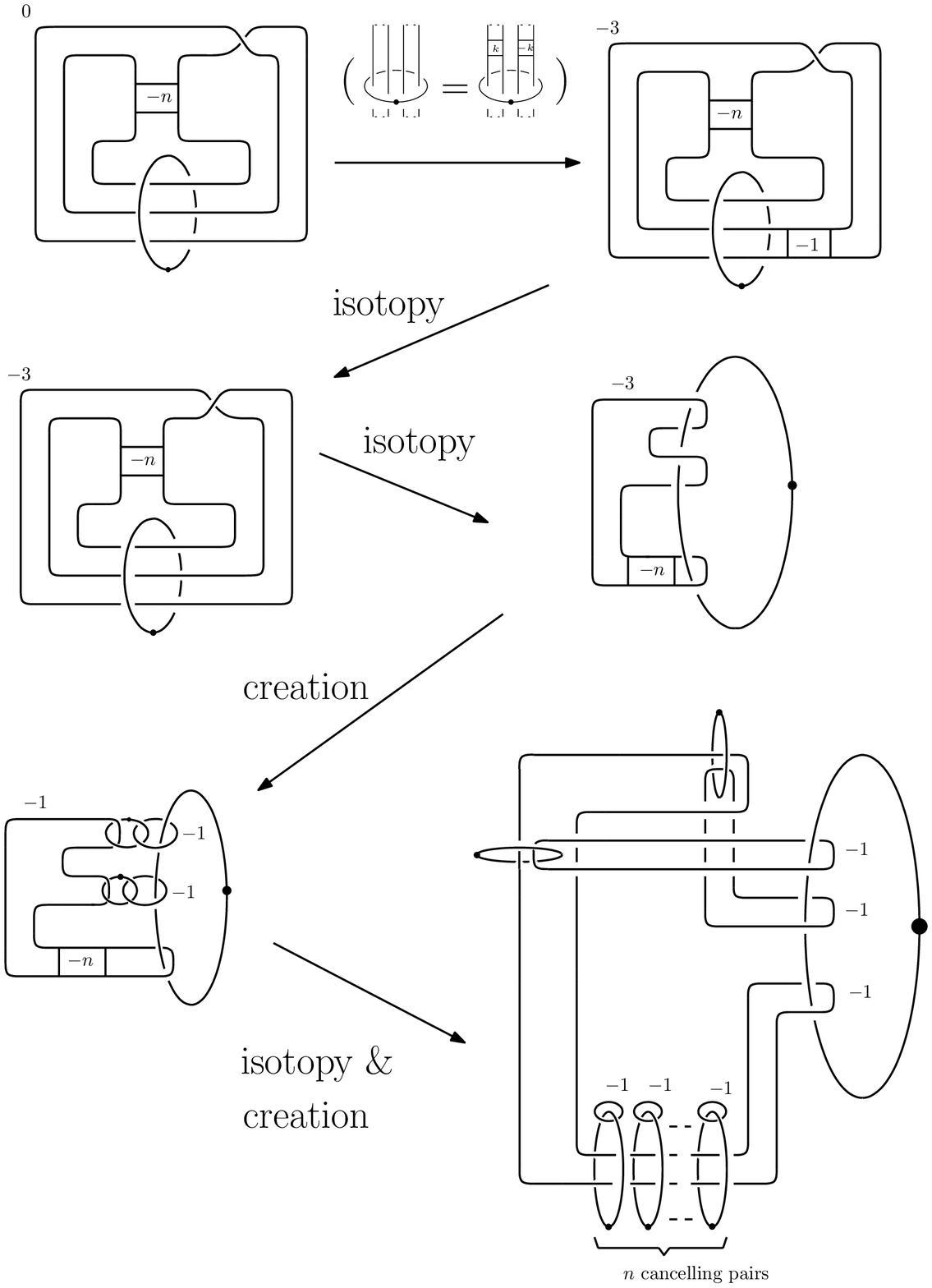}
	\caption{Handlebody moves applied to $W^n$}
	\label{fig:CorkWn_comp}
\end{figure}

\begin{figure}[h]
	\includegraphics[width=0.55\textwidth]{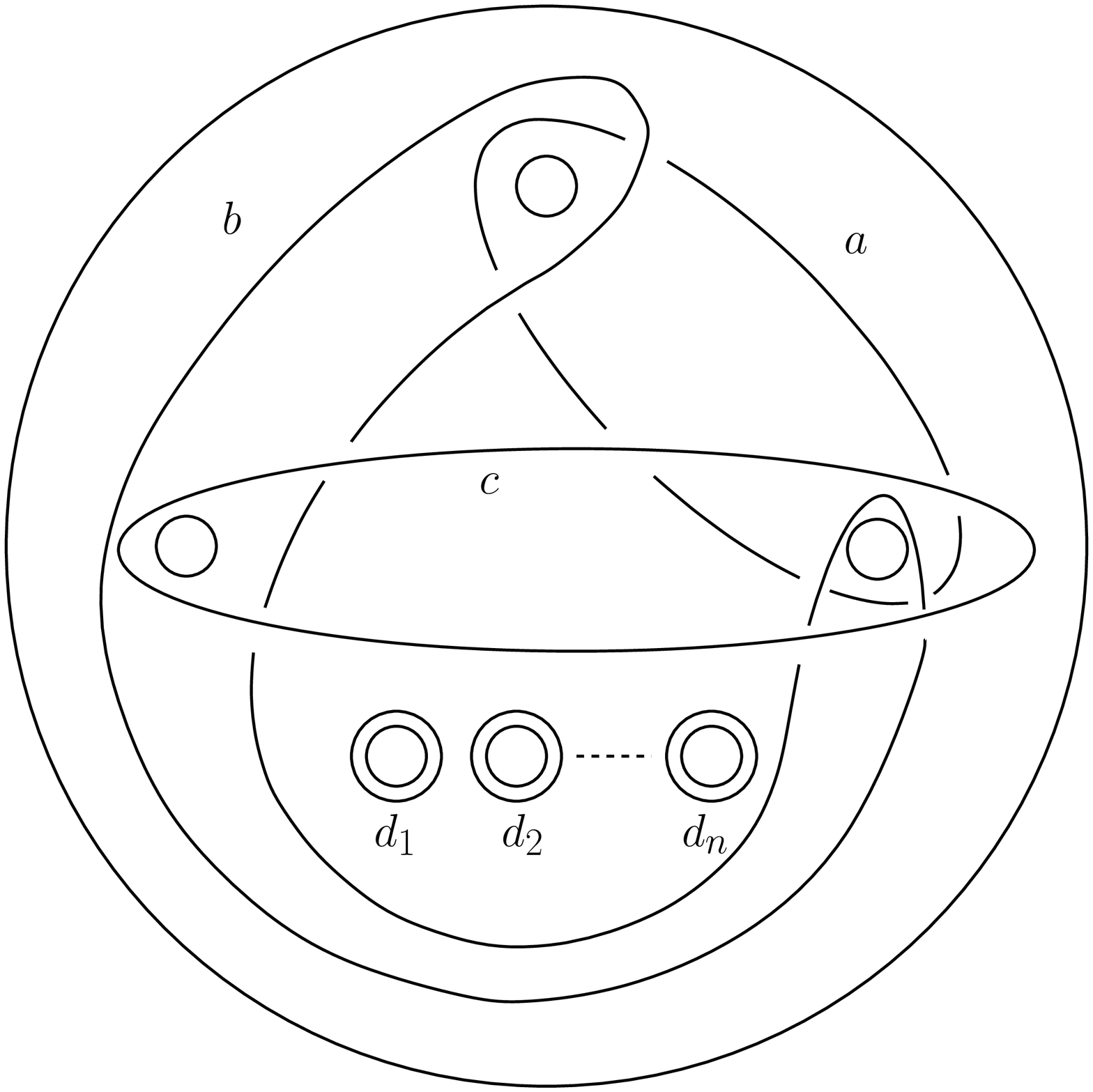}
	\caption{Our planar PALF on $W^n$}
	\label{fig:corkWnPALF}
\end{figure}

Now that we know $W^n$ admits a planar PALF, by results of Loi-Piergallini \cite{LP} and Akbulut-Ozbagci \cite{AO} there is a corresponding Stein structure on $W^n$ which we denote by $J_2^n$. Let $\xi_2^n$ be the induced contact structure on $\partial W^n$.  Note that $\xi_2^n$ is supported by a planar open book.  The next result which is due to Ozsv\'ath-Stipsicz-Szab\'o implies that $\pi ( c^+(\xi_2^n))=0$

\begin{theorem} \cite[Theorem 1.2]{OSS}\label{t:OSS}
Let $Y$ be a $3$-manifold and $\xi$ a contact structure on $Y$. 
Suppose that $\xi$ is supported by a planar open book decomposition. Then $\pi ( c^+(\xi))=0$. 
\end{theorem}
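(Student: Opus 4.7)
My plan is to exploit the interplay between planar open books, Lefschetz fibrations, and cobordism maps on $HF^+$, comparing $(Y,\xi)$ with the standard tight contact structure on a connected sum $\#_k S^1\times S^2$, whose contact invariant visibly sits in the $\Tplus$-tower part of Heegaard Floer homology.

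I would start from a planar open book $(\Sigma,\phi)$ supporting $\xi$, with $\Sigma$ a disk with holes. After a sequence of positive stabilizations and insertions of cancelling pairs of Dehn twists (using the lantern, chain, or daisy relations available on a planar surface), the aim is to rewrite the monodromy as a word in positive Dehn twists on a possibly larger planar page. This exhibits $(Y,\xi)$ as the concave boundary of a planar PALF cobordism $W$ whose convex end is supported by a trivial planar open book, namely the standard contact structure on $(\#_k S^1\times S^2,\xi_{\mathrm{std}})$ for some $k$.

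Next, I would invoke naturality of the contact invariant under the Stein cobordism $W$: the induced map $F^+_{-W}\colon HF^+(-\#_k S^1\times S^2)\to HF^+(-Y)$ sends $c^+(\xi_{\mathrm{std}})$ to $c^+(\xi)$ by Plamenevskaya's theorem. Since $HF^+(-\#_k S^1\times S^2)$ has no reduced summand and $c^+(\xi_{\mathrm{std}})$ lies in the tower part of this group, $\pi(c^+(\xi_{\mathrm{std}}))=0$. Because cobordism maps are $U$-equivariant and commute with the projection onto the reduced summand, one concludes $\pi(c^+(\xi))=0$.

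The main obstacle is ensuring the cobordism can be constructed with only positive monodromy. When the original monodromy contains negative Dehn twists, producing such a PALF cobordism requires suitable relations in the mapping class group of the planar surface, and these are not always available without further stabilization. An alternative route I would consider is to work directly with an open-book-compatible Heegaard diagram \`a la Honda--Kazez--Mati\'c, locate the distinguished generator representing $c^+(\xi)$, and show that it lies in the image of $U^N$ for every $N$, i.e., in $\bigcap_N U^N\, HF^+(-Y)$. The essential ingredient in either route is to exploit the disk-with-holes structure of the page to force divisibility of $c^+(\xi)$ by arbitrary powers of $U$, from which $\pi(c^+(\xi))=0$ is immediate.
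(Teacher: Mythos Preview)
The paper does not prove this theorem; it is quoted from Ozsv\'ath--Stipsicz--Szab\'o and used as a black box. So there is no ``paper's own proof'' to compare against, and your proposal must be judged on its own.

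Your main route has a genuine gap. You propose to positively stabilize and use relations in the planar mapping class group to factor the monodromy $\phi$ as a product of \emph{right-handed} Dehn twists, thereby building a Stein cobordism from $(\#_k S^1\times S^2,\xi_{\mathrm{std}})$ to $(Y,\xi)$. But the existence of such a positive factorization (after positive stabilization) is exactly equivalent to $(Y,\xi)$ being Stein fillable, and this is \emph{not} a consequence of planarity. Overtwisted contact structures are supported by planar open books, and there are tight planar contact structures that are not Stein fillable; for these no amount of positive stabilization or lantern/daisy substitution will produce a positive word. You even flag this obstacle yourself, but you do not overcome it, so the argument as written only handles the Stein-fillable case (where, incidentally, the conclusion is already easy for other reasons).

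There is also a subtler point in your cobordism step. The Stein cobordism $W$ you describe goes from $\#_k S^1\times S^2$ to $Y$; reversing orientation, the map $F^+_{-W}$ goes from $HF^+(-Y)$ to $HF^+(-\#_k S^1\times S^2)$, not the direction you wrote. The correct naturality statement (Plamenevskaya, Ozsv\'ath--Szab\'o) sends $c^+(\xi)$ to $c^+(\xi_{\mathrm{std}})$, which does not by itself give information about $\pi(c^+(\xi))$. One has to argue via $U$-divisibility rather than by pushing forward from the tower.

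The actual Ozsv\'ath--Stipsicz--Szab\'o proof avoids positive factorizations entirely: it caps off the boundary circles of the planar page to obtain a (possibly achiral) Lefschetz fibration with $S^2$ fibers, embeds the resulting $4$-manifold in a closed symplectic manifold with $b_2^+=1$ built from copies of $\mathbb{CP}^2$ and $\overline{\mathbb{CP}^2}$, and then uses the structure of the cobordism maps (and the $U$-action) on $HF^+$ to show $c^+(\xi)\in \bigcap_N \mathrm{Im}\,U^N$. Your ``alternative route'' gestures at the right target---showing infinite $U$-divisibility---but the mechanism that produces it is the capping-off geometry specific to planar pages, not a direct Heegaard-diagram computation.
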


We have just observed that the Ozsv\'ath-Szab\'o invariants of $\xi_1^n$ and $\xi_2^n$ satisfy the required properties. It is clear that the induced \Spinc structures $\mathfrak{s}^n_1$ and $\mathfrak{s}^n_2$ are the same since $W^n$ is contractible. To prove the rest of the theorem first we observe that the boundary of each $W^n$ is the manifold $S^3_{1/n}(K)$ which is obtained from $S^3$ by $1/n$-surgery on the knot $K$ on the left hand side of  Figure \ref{fig:knot}.

\begin{figure}
	\includegraphics[width=.8\textwidth]{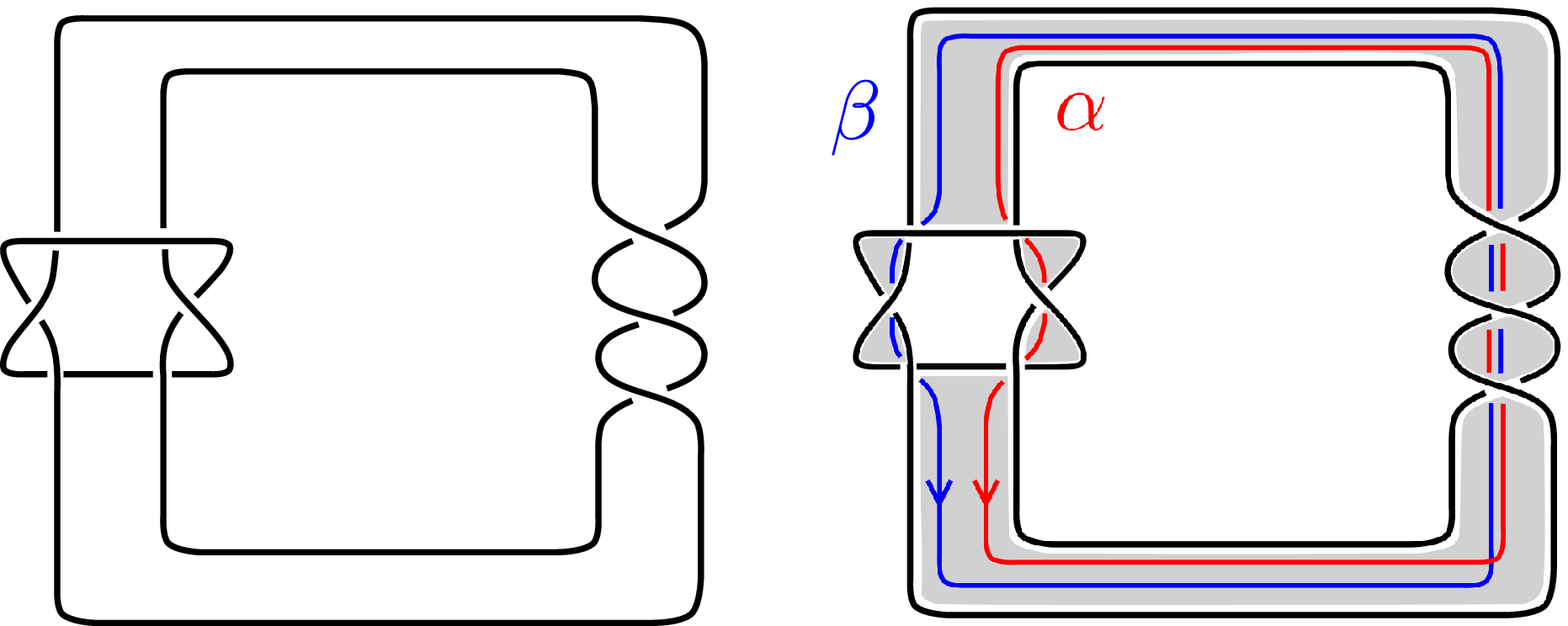}
	\caption{The knot $K$ is on the left. The Seifert surface of $K$ together with its homology generators are on the right.}
	\label{fig:knot}
\end{figure}

\begin{lemma}
We have $\partial W^n=S^3_{1/n}(K)$ for all $n\in \mathbb{N}$.
\end{lemma}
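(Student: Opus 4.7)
The plan is to establish the lemma by Kirby calculus, reducing the handlebody presentation of $W^n$ to a $1/n$-surgery description on a single knot in $S^3$. Starting from Figure \ref{fig:corkWn}, I would first replace the dotted component (a $1$-handle) by a $0$-framed unknot, so that $\partial W^n$ is realized as Dehn surgery on the $2$-component link $L^n$ with both components $0$-framed.

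The central move is to convert the box of $n$ full twists into a $1/n$ surgery coefficient via an inverse Rolfsen twist. I would introduce an auxiliary unknotted meridian $U$ encircling the two strands passing through the twist box, assigning it surgery coefficient $\pm 1/n$ with the sign chosen so that the Rolfsen twist exactly cancels the $n$ box twists. This replaces $L^n$ by the untwisted link $L^0$ (both original components still $0$-framed) together with the new component $U$.

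Since $L^0$ now consists of two $0$-framed unknots with linking number $\pm 1$, it is a Kirby-canceling handle pair at the boundary level; a handle slide followed by a slam-dunk reduces $L^0$ to the empty link, restoring the ambient manifold to $S^3$. The diffeomorphism induced by this cancellation sends $U$ to some knot in $S^3$, which should be precisely the knot $K$ of Figure \ref{fig:knot}. The surgery coefficient on $U$ is unchanged during this process, so $\partial W^n \cong S^3_{\pm 1/n}(K)$, with an orientation check fixing the sign as $+1/n$.

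The main obstacle is tracking the image of $U$ under the diffeomorphism coming from the cancellation of $L^0$ and verifying that this image is exactly $K$. This requires carrying out the Kirby moves explicitly on the diagram, with particular care for the strands of $U$ as they pass through the components being cancelled, and for the sign of the surgery coefficient (to rule out $-1/n$ in favor of $+1/n$).
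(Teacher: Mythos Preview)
Your approach is essentially the paper's: both proceed by Kirby/Rolfsen calculus, converting the two-component description of $\partial W^n$ into $1/n$-surgery on a single knot, with the paper simply exhibiting the sequence of moves in figures (and citing Akbulut--Kirby for $n=1$). Your device of absorbing the $n$-twist box into a $\pm 1/n$-framed meridian is precisely how the $1/n$ coefficient emerges.

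Two points in your sketch need care. First, a Rolfsen twist on $U$ shifts the framing of each $K_i$ by $(\text{number of twists})\cdot lk(K_i,U)^2$, so the components of your ``$L^0$'' are not automatically still $0$-framed; you must check the algebraic intersection of each component with the disk bounded by $U$. Second, ``two $0$-framed unknots with linking number $\pm 1$'' is not in general a canceling pair: for a handle-slide followed by a slam-dunk to annihilate them you need one component to be an honest meridian of the other, i.e.\ $L^0$ must actually be a Hopf link, not merely have the right linking number. Both issues are resolved by drawing the diagram and inspecting it, which is exactly what the paper does; they are not a different strategy but rather the very details you already flagged as ``the main obstacle.''
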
 

\begin{proof} 
This was proved for $n=1$ by Akbulut and Kirby \cite[Proposition 1-(3)] {AKi}. One can easily modify their argument to see the proof in the general case. Alternatively we can apply the handlebody moves depicted in Figure \ref{fig:boundary_Wn} and Figure \ref{fig:boundary_Wn_2} to show that $\partial W^n$ is obtained from $S^3$ by $1/n$-surgery on a knot. It is easy to see that the knots in Figure \ref{fig:knot} and at the end of Figure \ref{fig:boundary_Wn_2}  are isotopic.
\end{proof}
\begin{figure}[h]
	\includegraphics[width=.80\textwidth]{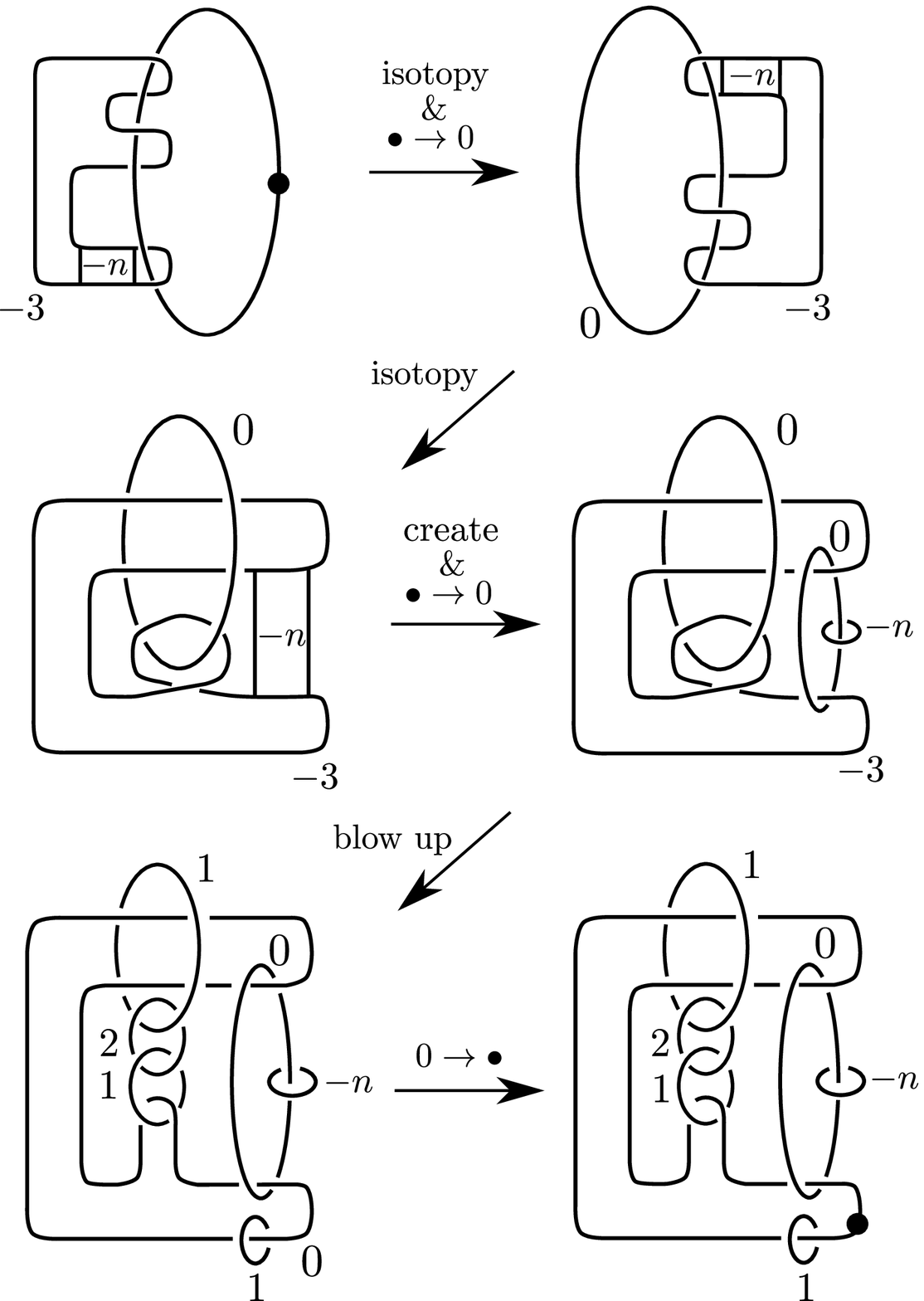}
	\caption{}
	\label{fig:boundary_Wn}
\end{figure}

\begin{figure}[h]
	\includegraphics[width=1.00\textwidth]{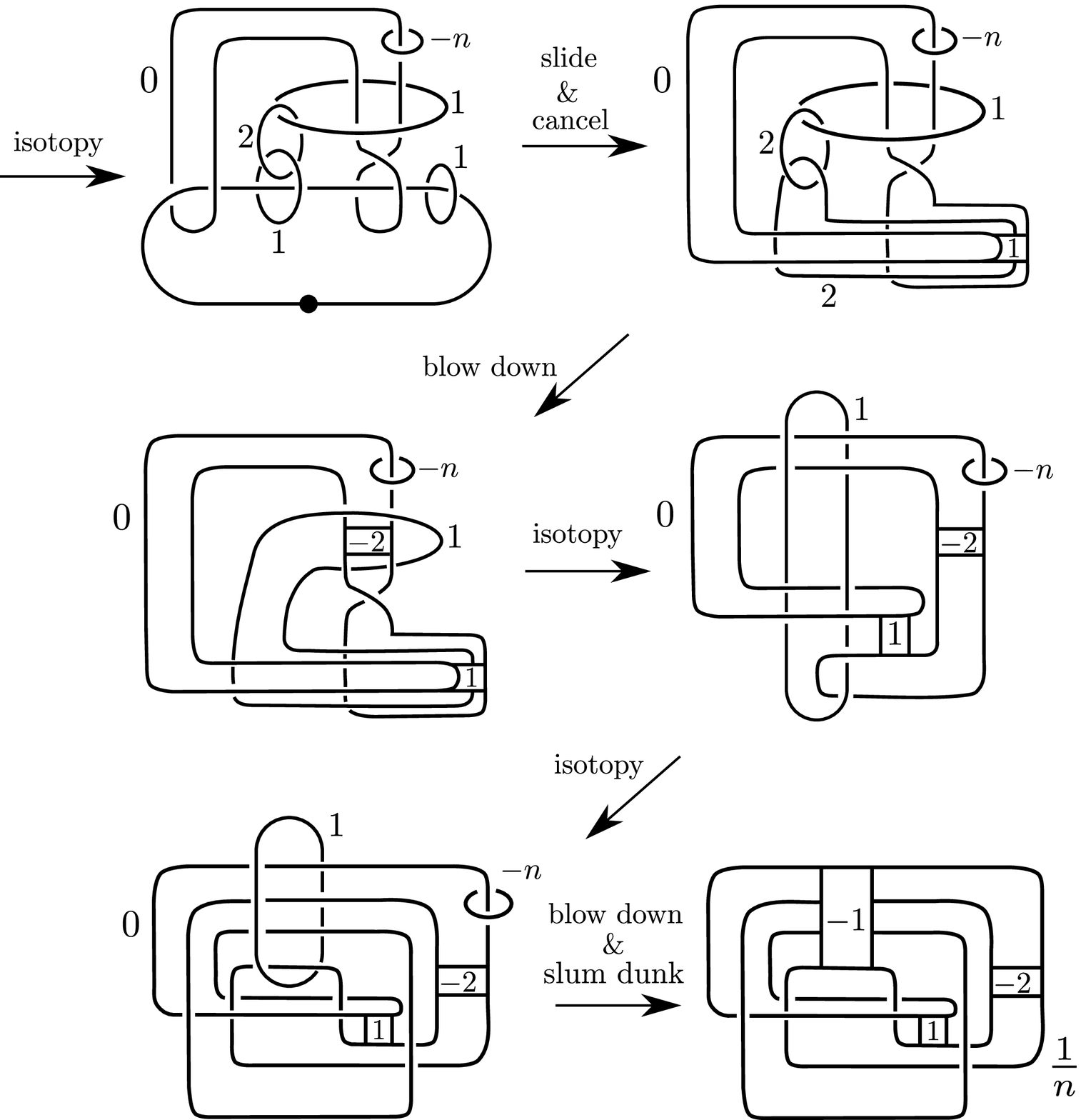}
	\caption{}
	\label{fig:boundary_Wn_2}
\end{figure}

Now the irreducibility of $\partial W^n$ follows from a result of Gordon and Luecke \cite{GL} which says that if a reducible manifold appears as a surgery on a knot in $S^3$ then one of its summands must be a lens space. Since $\partial W^n$ is an integral homology sphere, it cannot have any non-trivial lens space summands. 

\begin{lemma}
The Alexander polynomial of the knot $K$ is given by $\Delta_K(t)=2t^2-5t+2$.
\end{lemma}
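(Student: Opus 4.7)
The plan is to compute the Alexander polynomial directly from the Seifert surface shown on the right in Figure~\ref{fig:knot}. Since the surface is drawn together with a basis of its first homology, the natural route is to build the Seifert matrix and then invoke the standard formula $\Delta_K(t) \doteq \det(tV - V^T)$, where $\doteq$ denotes equality up to a unit $\pm t^k$.

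First I would fix the two curves $a_1, a_2$ depicted as the homology generators of the Seifert surface $F$, and orient $F$ so as to define the positive normal push-off $a_j^+$. The entries of the $2 \times 2$ Seifert matrix are then the linking numbers $V_{ij} = \mathrm{lk}(a_i, a_j^+)$. This reduces the computation to four linking number calculations, each read off from the diagram by counting signed crossings of $a_i$ with the push-off of $a_j$. Because the displayed Seifert surface has genus $1$, the rank of $H_1(F)$ is $2$, which is consistent with the expected polynomial being quadratic in $t$.

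Next, I would assemble $V$ and compute
\[
\det(tV - V^T) = \det(V)\,t^2 + \bigl(V_{12}^2 + V_{21}^2 - 2 V_{11}V_{22}\bigr)\,t + \det(V).
\]
To match the claim $\Delta_K(t) = 2t^2 - 5t + 2$ (up to sign and multiplication by a power of $t$), the Seifert matrix must satisfy $\det(V) = \pm 2$ and $V_{12}^2 + V_{21}^2 - 2V_{11}V_{22} = \mp 5$. One expects a matrix such as $V = \bigl(\begin{smallmatrix} 0 & 1 \\ 2 & 0 \end{smallmatrix}\bigr)$ (or a conjugate thereof) to fall out of the diagram, which indeed yields $\det(tV - V^T) = -(2t^2 - 5t + 2)$, giving the claimed polynomial up to the usual normalization.

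The principal obstacle is the bookkeeping of signs and push-off conventions when reading linking numbers off the planar picture of a non-trivially embedded Seifert surface with twisted bands; a miscounted crossing or an incorrectly oriented push-off will change the off-diagonal entries and destroy the match. To guard against this I would independently verify the result by two sanity checks: the Alexander polynomial must be symmetric, i.e.\ $t^2 \Delta_K(t^{-1}) = \Delta_K(t)$, and it must satisfy $|\Delta_K(-1)| = |\det(V + V^T)| = $ determinant of $K$, with $\Delta_K(1) = \pm 1$ since $K$ has an integral Seifert form. Both are automatic from $2t^2 - 5t + 2$, providing a consistency check on the Seifert-matrix computation.
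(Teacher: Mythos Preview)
Your approach is essentially identical to the paper's: read off a Seifert matrix from the genus-one Seifert surface displayed in Figure~\ref{fig:knot} and compute $\Delta_K(t)$ as the determinant of $tV-V^T$ (the paper writes $\det(S-tS^T)$, which is the same up to a unit). The only discrepancy is your guessed matrix: the paper actually obtains
\[
S=\begin{pmatrix}3&-1\\-2&0\end{pmatrix},
\]
not $\bigl(\begin{smallmatrix}0&1\\2&0\end{smallmatrix}\bigr)$; both have determinant $-2$ and yield $\Delta_K(t)=2t^2-5t+2$, so the conclusion is unaffected, but when you carry out the linking-number count from the figure you should expect a nonzero diagonal entry coming from the self-linking of one of the bands.
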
	

\begin{proof}
We use the Seifert surface of $K$ that is indicated on the right hand side of Figure \ref{fig:knot}. With respect to the homology generators $\alpha,\;\beta$, the Seifert matrix is given by
$$ S=\left [ \begin{tabular}{rr}
3 & -1\\
-2&0
\end{tabular} \right ].$$
Then the Alexander polynomial is $\Delta_K(t)=\mathrm{Det}(S-tS^T)=2t^2-5t+2$.
\end{proof}

From the above Lemma we conclude that the Casson invariant of $\partial W^n$ is given by $$\lambda(\partial W^n)= \frac{n}{2}\Delta_K''(1)=2n,$$ which finishes the proof of Theorem \ref{theo:main}. 

\section{Final remarks}
We anticipate that there are lots of  symmetric links that give rise to contractible manifolds with distinct Stein structures.  For example in \cite{O}, Oba constructed an infinite family of planar PALFs on Mazur type contractible manifolds. We do not know whether the links defining Oba's manifold are admissible. If they are, then they could  be used in the proof instead of $L^n$. 

Finally, our techniques can only  distinguish those Stein structures supported by  planar PALFs from those which cannot, so we are unable to  detect more than $3$ distinct Stein structures on a contractible manifold. Hence the following is still an interesting open problem
\begin{Question}
Given $k\geq 3$, is there a contractible manifold $4$-manifold (with irreducible boundary) which admits $k$ distinct Stein structures?
\end{Question} 

\section{Acknowlegments}
This project started when Karakurt and Oba participated in $23^{\mathrm{rd}}$ G\"okova geometry-topology conference. We would like to thank the organizers for the stimulating atmosphere. 
Also, we would like to thank Takahiro Kitayama for helpful comments about $3$-manifolds. 
During the course of the project,  \c{C}a\u{g}r{\i} Karakurt was supported by a TUBITAK grant BIDEB 2232 No: 115C005, and 
Takahiro Oba was supported by JSPS KAKENHI Grant Number 15J05214. 

\bibliography{References}
\bibliographystyle{amsalpha}

\end{document}